\documentclass{amsart}

\usepackage{amsmath,amssymb}
\usepackage{graphicx}
\usepackage[usenames,dvipsnames,svgnames,table]{xcolor}
\usepackage[pdftex,breaklinks=true]{hyperref}
\usepackage{multirow}
\usepackage{multicol}
\usepackage{smartref}

\allowdisplaybreaks

\newtheorem{theorem}{Theorem}[section]

\newtheorem{lemma}[theorem]{Lemma}

\newtheorem{remark}[theorem]{Remark}

\theoremstyle{definition}
\newtheorem{definition}[theorem]{Definition}

\theoremstyle{remark}

    \renewenvironment{proof}[1][Proof]{ \textbf{#1.} }{\qed\\}

\newcommand{\parc}[1]{\partial_{#1}}

\newcommand{\R}{\mathbb{R}}     
\newcommand{\C}{\mathbb{C}}     

\numberwithin{equation}{section}

\DeclareMathOperator{\sign}{sign}
\DeclareMathOperator{\Div}{div}

\begin{document}

\title[Realization problems for limit cycles]{Realization problems for limit cycles of planar polynomial vector fields}

\author{Juan Margalef-Bentabol}
\address{Instituto de Estructura de la Materia, Consejo Superior de  Investigaciones Cient\'ificas, 28006 Madrid, Spain, and Instituto Gregorio Mill\'an, Grupo de Modelizaci\'on y Simulaci\'on Num\'erica, Universidad Carlos III de Madrid, 28911 Legan\'es, Spain}
\email{juan.margalef@uc3m.es}

\author{Daniel Peralta-Salas}
\address{Instituto de Ciencias Matem\'aticas, Consejo Superior de  Investigaciones Cient\'\i ficas, 28049 Madrid, Spain}
\email{dperalta@icmat.es}

\date{\today}
\keywords{Limit cycle, polynomial vector field, integrating factor, realization problem}

\begin{abstract}
We show that for any finite configuration of closed curves $\Gamma\subset \R^2$, one can construct an explicit planar polynomial vector field that realizes $\Gamma$, up to homeomorphism, as the set of its limit cycles with prescribed periods, multiplicities and stabilities. The only obstruction given on this data is the obvious compatibility relation between the stabilities and the parity of the multiplicities. The constructed vector fields are Darboux integrable and admit a polynomial inverse integrating factor.
\end{abstract}
\maketitle

\section{Introduction and statement of the main theorem}

We consider the planar vector field
\begin{equation}\label{eq1}
X=P(x,y)\parc{x}+Q(x,y)\parc{y}\,,
\end{equation}
where $P$ and $Q$ are polynomials. The degree of $X$ is defined as the maximum of the degrees of $P$ and $Q$, and will be denoted by $\deg(X)$. The study of the limit cycles of planar polynomial vector fields has a long tradition, starting with the celebrated second part of Hilbert's 16th problem, cf.~\cite{Ilyashenko 2002}, which consists in finding the maximum number of limit cycles for the vector field~\eqref{eq1} in terms of the degree $\deg(X)$, and studying the relative positions of these cycles. We recall that a limit cycle of $X$ is a periodic trajectory that is isolated. Despite having been formulated more than a century ago, Hilbert's 16th problem is still wide open, even for quadratic vector fields, i.e. $\deg(X)=2$.

A related problem that has attracted considerable attention in the last years is the realization problem for limit cycles. To state it in a precise way let us introduce some basic definitions. Let $C$ be a closed curve embedded in $\R^2$. A configuration of cycles is a finite set $\Gamma=\{C_1,\ldots,C_n\}$ of closed curves.
\begin{definition} \label{def_C_r_equivalentes}
We say that two configurations of cycles $\Gamma,\Gamma'$ are \emph{equivalent} if there exists a homeomorphism $\phi:\R^2\rightarrow\R^2$ such that $\phi(\Gamma)=\Gamma'$. A vector field $X$ is said to \emph{realize} a configuration of cycles $\Gamma$ if its set of limit cycles is equivalent to $\Gamma$.
\end{definition}

The realization (or inverse) problem asks if, for any configuration of cycles $\Gamma$, there exists a planar vector field $X$ that realizes $\Gamma$. In this problem it is usual to prescribe other dynamical properties of the limit cycles (e.g. stability) as well as some additional conditions on the vector field $X$ (e.g. regularity).

The realization problem for limit cycles was first addressed by Al'mukhamedov~\cite{Al63} for $C^k$ vector fields using the theory of Lyapunov functions. The same techniques allowed Sverdlove~\cite{Sverd81} to solve the $C^k$ realization problem completely (also prescribing the stability of the cycles), fixing a gap in Al'mukhamedov's construction that had been noticed by other authors, see~\cite{Sverd81} for details.

Regarding polynomial vector fields, the first realization result was obtained by Bautin in~\cite{Ba80} (later corrected in~\cite{DK93}), where explicit expressions for polynomial planar vector fields with a prescribed set of algebraic limit cycles were derived. Alternative constructions appear in the works of Winkel~\cite{Wi00}, Christopher~\cite{Chris01} and Korchagin~\cite{Ko05}. The idea of using inverse integrating factors to construct polynomial vector fields with prescribed algebraic limit cycles was first given in~\cite{G96,CGG97}. This idea culminated in the complete solution to the realization problem in the polynomial setting obtained by Llibre and Rodr\'iguez in~\cite{Llibre Rodriguez 2004} using inverse integrating factors and the Darboux theory of integrability. An alternative proof using Lyapunov functions was given in~\cite{Peralta 2005}, with a generalization to higher dimensions. A remarkable recent solution of the realization problem was obtained by Coll, Dumortier 
 and Prohens~\cite{CDP13} using polynomial Li\'enard equations and the theory of slow-fast systems. All these works provide an upper bound for $\deg(X)$ which depends on the configuration of cycles. Notice that since the degree of $X$ is not fixed a priori, the realization problem is much easier than Hilbert's 16th problem.

The main result of this paper is a realization theorem for planar polynomial vector fields prescribing not only the configuration of limit cycles, but also their periods, multiplicities and stabilities. These are the three most basic invariants under smooth conjugacy, so it is reasonable to consider them as a part of the realization problem. The limit cycles of the systems constructed in~\cite{Llibre Rodriguez 2004} and~\cite{CDP13} are all hyperbolic and their stabilities depend on the configuration that is realized (of course, there are no semistable limit cycles due to the hiperbolicity).

To state our main theorem let us introduce some notation. We call $D_C$ the compact set bounded in $\R^2$ by the closed curve $C$. We say that a limit cycle $C$ is stable (unstable) in the interior if all the trajectories of $X$ in $D_C$ that are sufficiently close to $C$ approach the limit cycle as $t\to \infty$ ($t\to -\infty$). Analogously, if all the trajectories of $X$ in $\R^2\setminus D_C$ that are close enough to $C$ approach the limit cycle as $t\to \infty$ ($t\to -\infty$), we say that $C$ is stable (unstable) in the exterior. It is obvious that the exterior stability of a cycle is determined by its interior stability and multiplicity.

\begin{theorem}\label{T:main}
Consider sets $\{T_1,\ldots,T_n\}$ of positive constants and $\{m_1,\ldots,m_n\}$ of positive integers. Let $\Gamma=\{C_1,\ldots,C_n\}$ be a configuration of cycles with fixed interior stabilities. Then $\Gamma$ is realized as the set of limit cycles of a polynomial vector field $X$, where each $C_k$ has multiplicity $m_k$, period $T_k$ and the required stability. Moreover, there exists an explicit upper bound for the degree of $X$ in terms of the prescribed quantities (see Theorem~\ref{main theorem}).
\end{theorem}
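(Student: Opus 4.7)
The plan is to work in the Darboux framework with a polynomial inverse integrating factor $V$, encoding the three local invariants of each cycle (period, multiplicity, stability) in separate pieces of the construction. Recall from Giacomini--Llibre--Vi\~ano that every limit cycle of a vector field admitting a polynomial inverse integrating factor $V$ must lie in $\{V=0\}$; this immediately reduces the existence question to constructing a polynomial $V$ with $\{V=0\}$ equivalent to $\Gamma$ and producing a polynomial $X$ that admits $V$ as an inverse integrating factor.

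First, I would polynomially realise $\Gamma$. Up to homeomorphism of $\R^2$ one may replace $\Gamma$ by a canonical model in which every cycle is a translated and scaled Euclidean circle with the same nesting combinatorics as $\Gamma$; this yields, for each $k$, an irreducible polynomial $f_k$ of explicitly bounded degree with $\{f_k=0\}$ homeomorphic to $C_k$ and $\bigl\{\prod_k f_k=0\bigr\}$ jointly equivalent to $\Gamma$. Setting $V:=\prod_{k=1}^n f_k^{m_k}$ then produces a polynomial whose zero set is $\bigcup_k C_k$ with the correct multiplicity baked in along each component.

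Next, I take an ansatz of Darbouxian type
$$X\;=\;V\cdot\bigl(-\parc{y} H,\;\parc{x} H\bigr)+W,$$
where $H=\sum_k\alpha_k\log f_k$ plays the role of a multivalued Darboux first integral and $W$ is a polynomial correction vanishing to order $\ge m_k$ along each $C_k$. The condition $m_k\ge 1$ ensures that $V$ cancels the poles of $\parc{x}H$ and $\parc{y}H$, so the first summand is polynomial; the defining identity $V\cdot\Div(X)=X(V)$ of an inverse integrating factor then reduces, after the Darbouxian cancellations, to the requirement that $W/V$ be divergence-free wherever it is defined. The free constants $\alpha_k$ serve to tune periods: a standard line-integral computation along $C_k$ shows that the period depends monotonically and surjectively on $\alpha_k>0$, so one picks $\alpha_k$ to match $T_k$. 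The polynomial $W$ is then tailored so that the displacement function near $C_k$ has the form $d(h)=c_k h^{m_k}+O(h^{m_k+1})$ with the sign of $c_k$ chosen to realise the prescribed interior stability; the multiplicity is then $m_k$ by construction, and the exterior behaviour is forced by the parity of $m_k$, recovering the compatibility mentioned in the abstract.

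The degree of $X$ is controlled by a polynomial expression in $\sum_k m_k\deg f_k$ and in the degrees of the building blocks used for $W$, giving the explicit estimate to be compared with Theorem~\ref{main theorem}. The central difficulty, as I see it, is the simultaneous construction of a single polynomial $W$ that (i) preserves the inverse-integrating-factor identity, (ii) has the prescribed order of vanishing along every $C_k$, (iii) realises the prescribed interior stability through the sign of a single leading coefficient, and (iv) introduces no spurious zeros of $X$ nor extra invariant algebraic curves, so that no unwanted limit cycles or equilibria appear. The nested topology of $\Gamma$ couples these conditions across different $k$, and it is precisely here that the combinatorics of the configuration constrains the algebraic form of $W$; by comparison, period tuning, the polynomial realisation of $\Gamma$ and the degree count are comparatively routine once this central construction is in place.
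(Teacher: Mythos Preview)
Your framework---Darboux integrability, a polynomial inverse integrating factor $V=\prod_k f_k^{m_k}$ built from circles realising $\Gamma$, and exponent parameters to tune the periods---matches the paper's. However, the proposal has two concrete gaps, and the stability mechanism you envisage is quite different from what the paper actually does.

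First, with $H=\sum_k\alpha_k\log f_k$ your base field $V\,(-\partial_y H,\partial_x H)$ vanishes identically on each $C_k$ with $m_k\ge 2$: on $\{f_k=0\}$ every summand of $V\,\partial H$ carries either a factor $f_k^{m_k}$ (for $j\neq k$) or $f_k^{m_k-1}$ (for $j=k$). Since you require $W$ to vanish to order $\ge m_k$ along $C_k$ as well, the whole of $X$ vanishes on $C_k$, so $C_k$ is a curve of equilibria rather than a periodic orbit. The paper avoids this by enlarging the Darboux first integral with an extra factor $\exp\bigl(\Lambda\sum_k h_k\bigr)$, $h_k\sim f_k^{1-m_k}$, which produces additional polynomial terms (their $F,G$) that do \emph{not} vanish on $C_k$; this is not a cosmetic correction but the mechanism that makes $C_k$ a genuine orbit when $m_k\ge 2$.

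Second, your $H$ is single-valued on $\R^2\setminus\bigcup_k C_k$, so $X/V$ is globally Hamiltonian there; nearby level curves of $H$ are closed and $C_k$ sits inside a period annulus, not as an isolated periodic orbit. The paper's first integral contains, in addition, the factors $B=\prod g_k$ and $C=\exp(-2\sum\theta_k)$ built from the centres of the primary circles; it is precisely the multivaluedness of the angular functions $\theta_k$ that makes $\int_{\gamma}\omega/V\neq 0$ for curves $\gamma$ near $C_k$ and forces $C_k$ to be a limit cycle. Your $W$ would have to supply this multivaluedness, but you give no indication how a polynomial $W$ with $W/V$ divergence-free achieves it.

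Finally, the paper does \emph{not} control stability through a correction term $W$ with a chosen sign in the displacement map. Instead it first computes (Lemmas~\ref{lemma stability m=1} and~\ref{lemma stability m>1}) that the interior stability of $C_k$ is determined combinatorially by the nesting of $\Gamma$ and the multiplicities, and then \emph{adds auxiliary concentric circles} to $\Gamma$ so that the resulting combinatorics gives each $C_k$ the desired sign. The unwanted extra cycles are destroyed afterwards by multiplying the field by $\prod_j l_j$, where each $l_j$ vanishes at a single point of the $j$-th auxiliary circle, turning it into a homoclinic loop. This global trick sidesteps entirely the local ``tailoring of $W$'' that you identify as the central difficulty.
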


The proof of Theorem~\ref{T:main} is based on the construction of Llibre and Rodr\'iguez~\cite{Llibre Rodriguez 2004}. As in~\cite{Llibre Rodriguez 2004}, the realized limit cycles are algebraic and the vector fields are Darboux integrable and admit a polynomial inverse integrating factor. We would like to remark that the limit cycles of a polynomial vector field do not need to be algebraic, cf.~\cite{Odani 1995}, but we are not aware of any construction of polynomial vector fields with given limit cycles that are not algebraic. It thus remains an interesting open problem to characterize those analytic curves that can be limit cycles of a polynomial vector field.

The structure of the paper is as follows. In Section~\ref{section periods} we construct a vector field $X_T$ realizing the configuration of cycles $\Gamma$ with prescribed periods. The construction of a vector field $X_m$ with prescribed limit cycles and multiplicities is presented in Section~\ref{section multiplicity}, where we also combine both constructions to obtain a vector field $X_{Tm}$ prescribing periods and multiplicities at the same time. In Section~\ref{section stability} we construct a vector field $X_{Ts}$ realizing the configuration of cycles $\Gamma$ with prescribed periods and stabilities. Finally, using the constructions in the previous sections, the main theorem is proved in Section~\ref{section main theorem}.

\section{A planar polynomial vector field with prescribed limit cycles and periods}\label{section periods}
In the following theorem we show that any configuration of cycles $\Gamma$ can be realized by a planar polynomial vector field, where the period of each limit cycle is also prescribed. The method of the proof is a variation of the construction introduced in~\cite{Llibre Rodriguez 2004}. In the statement of the theorem, a curve $C_k\in\Gamma$ is called primary if no other curve $C_{j}\in\Gamma$, $j\neq k$, is contained in the domain $D_{C_k}$.

We recall that a \emph{Darboux first integral} is a (possibly multivalued) function $G$ that can be written as:
\[G=e^{g/h}\prod_ {l=1}^Lf_l^{\lambda_l}\,,\]
where $f_l$, $g$ and $h$ are complex polynomials and $\lambda_l\in\C$ are complex constants, and
a smooth function $V$ is an \emph{inverse integrating factor} of a vector field $X$ if
\[X\cdot \nabla V=V \Div(X)\,,\]
where $\Div$ denotes the divergence operator.

\begin{theorem}\label{teorema period}
Let $\Gamma=\{C_1,\ldots,C_n\}$ be a configuration of cycles and $\{T_1,\ldots,T_n\}$ a set of positive constants, then $\Gamma$ is realized by a planar polynomial vector field $X_T$ with $\deg(X_T)<2(n+r)$, where each periodic orbit $C_k$ has period $T_k$. Here $r$ is the number of primary cycles in $\Gamma$. Moreover, $X_T$ admits a polynomial inverse integrating factor, it is Darboux integrable and all its limit cycles are algebraic and hyperbolic.
\end{theorem}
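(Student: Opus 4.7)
The plan is to adapt the Darboux-style construction of Llibre and Rodr\'iguez~\cite{Llibre Rodriguez 2004}, keeping the same skeleton but using the freedom in its coefficients to pin down the period on each branch. I proceed in three steps.

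\emph{Step 1 (normalization and template).} Since $\Gamma$ need only be realized up to homeomorphism, first replace it by a combinatorially equivalent configuration of ellipses, writing each $C_k=\{f_k=0\}$ for a degree-two polynomial $f_k$ with $f_k<0$ in $D_{C_k}$ and $f_k>0$ outside; the $r$ primary cycles correspond to the roots of the inclusion tree. Following the single-cycle Hopf model $X=-y\,\partial_x+x\,\partial_y+f(x\,\partial_x+y\,\partial_y)$ (whose inverse integrating factor is $(x^2+y^2)(x^2+y^2-1)$, not just $f$), I look for a polynomial vector field of the form
\[X_T \;=\; \sum_{k=1}^n \lambda_k\,\Bigl(\prod_{j\neq k}f_j\Bigr)J\nabla f_k \;+\; V\,W, \qquad J=\begin{pmatrix}0 & -1\\ 1 & 0\end{pmatrix},\]
with $V=h\prod_k f_k$ a polynomial inverse integrating factor, $h$ an auxiliary polynomial of degree at most $2r$ built out of one quadratic factor per primary cycle (analogous to the $x^2+y^2$ above), $W$ a polynomial vector field chosen so that the relation $X_T(V)=V\,\mathrm{div}(X_T)$ really holds, and $\lambda_k\in\R$ free parameters. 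Each summand has degree strictly less than $2(n+r)$, which gives the announced bound. By construction $X_T$ carries a (possibly multivalued) Darboux first integral
\[H \;=\; \sum_{k=1}^{n}\lambda_k\,\log|f_k|+G,\]
in which $G$ provides the monodromy (a $\theta$-type term, as in the Hopf model) that turns the level sets of $H$ into spirals rather than closed curves.

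\emph{Step 2 (periods and hyperbolicity).} On each $C_k$ the term $V\,W$ vanishes, so the flow of $X_T$ along $C_k$ is the Hamiltonian flow of $f_k$ rescaled by $\lambda_k\prod_{j\neq k}f_j$, and its period equals
\[T_k(\lambda_k) \;=\; \frac{1}{\lambda_k}\oint_{C_k}\frac{d\ell}{\bigl|\prod_{j\neq k}f_j\bigr|\cdot|\nabla f_k|} \;=\; \frac{I_k}{\lambda_k},\]
where $I_k>0$ is an explicit geometric quantity depending only on the chosen ellipses. Setting $\lambda_k:=I_k/T_k$ therefore imposes the prescribed period on $C_k$. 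The logarithmic singularity of $H$ at $\{f_k=0\}$, combined with the monodromy coming from $G$, forces nearby level sets of $H$ to spiral into $C_k$, making it an isolated periodic orbit with characteristic exponent a nonzero constant multiple of $\lambda_k$; hence $C_k$ is hyperbolic, algebraic, and has the required period.

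\emph{Step 3 (exhaustion and main obstacle).} Darboux integrability forces every orbit of $X_T$ to lie on a level set of $H$, and the monodromy of $G$ ensures that on every connected component of $\R^2\setminus\{V=0\}$ these level sets are non-closed spirals, so no additional periodic orbit appears and $\{C_1,\dots,C_n\}$ is exactly the limit-cycle set. The main obstacle is the combinatorial bookkeeping needed to simultaneously produce $h$, $W$ and the signs of the $\lambda_k$'s so that (i) the spiraling is coherent at every node of the nesting tree and near infinity (one quadratic factor of $h$ per primary cycle is used to control the unbounded component), (ii) $V$ really is a polynomial inverse integrating factor, and (iii) no parasitic limit cycle is introduced. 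This case analysis, performed separately on each primary cycle, is what forces the extra $2r$ in the degree bound and is the only substantive departure from~\cite{Llibre Rodriguez 2004}, where only the configuration---not the periods---had to be matched.
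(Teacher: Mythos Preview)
Your plan is essentially the paper's proof: start from the Llibre--Rodr\'iguez template, attach one free positive weight per cycle, and observe that on $C_k$ the field restricts to that weight times a fixed nonvanishing tangent field, so the weight can be solved for to impose the period $T_k$. Two small points deserve correction. First, your template $X_T=\sum_k\lambda_k\bigl(\prod_{j\ne k}f_j\bigr)J\nabla f_k+V\,W$ with $V=h\prod_k f_k$ cannot satisfy the degree bound as written: since $\deg V=2(n+r)$, any nonzero polynomial $W$ forces $\deg(VW)\ge 2(n+r)$. The paper fixes this by defining $X_T:=AB\,J\nabla H_T$ with $H_T=\sum_k\tau_k\ln f_k+\ln B-2\sum_{j\le r}\theta_j$ (their $B=\prod_{j\le r}g_j$ is your $h$); after the identity $B\,\partial_x C=C\,\partial_y B$ the angular terms become polynomial and one obtains
\[
X_T \;=\; B\sum_{k}\tau_k\Bigl(\prod_{j\ne k}f_j\Bigr)J\nabla f_k \;+\; A\bigl(J\nabla B+\nabla B\bigr),
\]
so the rotational part carries the factor $B$ and the radial part only the factor $A$, giving degree $2(n+r)-1$. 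Second, you call the bookkeeping for $h$, $W$ and the signs the ``main obstacle'' and the ``substantive departure'' from \cite{Llibre Rodriguez 2004}; in fact all of that is inherited verbatim from their construction (including the monodromy argument excluding parasitic cycles), and the \emph{only} change in the paper is replacing $A=\prod f_k$ by $A_T=\prod f_k^{\tau_k}$ inside the first integral, which produces exactly the scalar $\tau_k$ on $C_k$ and nothing else.
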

\begin{proof}
It is well known~\cite{Llibre Rodriguez 2004} that any configuration of cycles is homeomorphic to a set of circles. Accordingly, we can take that $\Gamma$ consists of $n$ disjoint circles of radii $\{r_k\}_{k=1}^n$ centered at the points $\{p_k\}_{k=1}^n$, i.e.
\begin{equation}\label{eq:circles}
C_k=\{f_k(x,y)=0\} \quad \text{ with } \quad f_k(x,y):=(x-x_k)^2+(y-y_k)^2-r^2_k\,,
\end{equation}
where $p_k=(x_k,y_k)$. We can safely assume that the first $r$ cycles $\{C_1,\ldots,C_r\}$ are primary and that no point $p_k$ is contained in any circle $C_j$.

Let us introduce the following auxiliary functions:
\begin{multicols}{2}\noindent
\begin{align*}
&g_k(x,y):=(x-x_k)^2+(y-y_k)^2\,,\\
&A:=\prod_{k=1}^n f_k\,,\\
&A_T:=\prod_{k=1}^n f_k^{\tau_k}\,,\\
&B:=\prod_{k=1}^r g_k\,,\\
&C:=\exp\left(-2\sum_{k=1}^r\theta_k\right)\,,\\
&D_T:=A_TBC\,,\\
&H_T:=\ln{D_T}\,,
\end{align*}
\end{multicols}
\noindent where $\tau_k$ are positive constants that will be fixed later in order to prescribe the desired periods $T_k$, and $\theta_k$ is an angular (multivalued) function defined as
\begin{equation}\label{thetak}
\theta_k:=\arctan\left(\frac{y-y_k}{x-x_k}\right)\,.
\end{equation}
We claim that the vector field $X_T$ in the statement of the theorem can be defined as
\[
X_T:=P_T\partial_x+Q_T\partial_y\,, \qquad\text{with}\qquad \begin{array}{l}
P_T:=-AB\dfrac{\partial H_T}{\partial y}\,,\\[2ex]
Q_T:= AB\dfrac{\partial H_T}{\partial x}\,.
\end{array}\]

We observe that in the case that $\tau_k=1$ for all $k$, then $A_T=A$ and the vector field $X_T$ is the same as in the construction by Llibre and Rodr\'{i}guez~\cite{Llibre Rodriguez 2004}. Throughout the paper, we shall use the notation $X_{LR}:=X_T|_{\tau_k=1}$.

To prove that $X_T$ satisfies all the claims in the statement of the theorem, let us first show that it is a polynomial vector field with $\deg(X_T)<2(n+r)$, that $D_T$ is a Darboux first integral and that $V_T:=AB$ is an inverse integrating factor. Indeed, noticing that
\begin{equation}\label{idC}
B\frac{\partial C}{\partial x}=C\frac{\partial B}{\partial y}\,,\qquad B\frac{\partial C}{\partial y}=-C\frac{\partial B}{\partial x}\,,
\end{equation}
a straightforward computation shows that $P_T$ and $Q_T$ can be written as
\begin{align}
P_T=A\left(\frac{\partial B}{\partial x}-\frac{\partial B}{\partial y}\right)-B\sum_{k=1}^n \tau_k\mu_k\frac{\partial f_k}{\partial y}\,,\label{eqpt}\\
Q_T=A\left(\frac{\partial B}{\partial x}+\frac{\partial B}{\partial y}\right)+B\sum_{k=1}^n \tau_k\mu_k\frac{\partial f_k}{\partial x}\,,\label{eqqt}
\end{align}
where 
$$\mu_k:=\prod_{j\neq k}^n f_j\,.$$ 
It is clear from these expressions that $P_T$ and $Q_T$ are polynomials of degree at most $2(n+r)-1$. Moreover, it is easy to check that the Darboux function $D_T$ satisfies $X_T\cdot\nabla D_T=0$, thus implying that $D_T$ is a Darboux first integral of $X_T$. Another easy computation shows that the vector field $\frac{X_T}{V_T}$ is divergence-free in $\mathbb R^2\setminus V_T^{-1}(0)$, and hence $V_T$ is an inverse integrating factor of $X_T$.

Accordingly, all the limit cycles of $X_T$ are contained in the zero set $V_T^{-1}(0)$ of its inverse integrating factor~\cite{GLV96}. Therefore, since
\[V_T^{-1}(0)=\Gamma\cup \{p_1,\ldots,p_r\}\,,\]
we conclude that if $X_T$ has a limit cycle, it has to be precisely one of the circles $\{C_1,\ldots,C_n\}$. Let us now prove that indeed all of them are realized as limit cycles.

We first show that each $C_k$ is a periodic trajectory of $X_T$. Since $V_T^{-1}(0)$ is invariant under the flow of $X_T$, it is enough to prove that $X_T$ does not vanish on each $C_k$. Using Eqs.~\eqref{eqpt} and~\eqref{eqqt} we can evaluate the components $P_T$ and $Q_T$ of the vector field on each circle $C_k=\{f_k=0\}$, thus obtaining
\begin{align}\label{eqxTck}
X_T|_{C_k}=\tau_kB|_{C_k}\mu_k|_{C_k}\left(-\frac{\partial f_k}{\partial y}\Big|_{C_k}\partial_x+\frac{\partial f_k}{\partial x}\Big|_{C_k}\partial_y\right)=\tau_kX_{LR}|_{C_k}\,.
\end{align}
Since the gradient of $f_k$ only vanishes at the point $p_k$, and the functions $B$ and $\mu_k$ do not vanish on $C_k$ (because all the circles in $\Gamma$ are disjoint and no point $p_j$ is contained in any circle $C_k$), we infer that $X_T$ has no zeros on $C_k$, which is then a periodic orbit.

Let us assume that the periodic orbit $C_k$ is not a limit cycle. Since $X_T$ is polynomial, it then follows that $C_k$ must belong to a period annulus, i.e. it is surrounded by a family of periodic orbits. Consider a periodic orbit $\gamma_k$ close enough to $C_k$ so that it is disjoint from the set $\Gamma$ and all the points $p_j$. In particular, we have that $V_T|_{\gamma_k}$ does not vanish. Defining the $1$-form $\omega_T:=-Q_Tdx+P_Tdy$, we have that
\begin{equation}\label{eqiom}
\int_{\gamma_k}\frac{\omega_T}{V_T}=0
\end{equation}
because $\gamma_k$ is a periodic orbit of $X_T$. On the other hand, using the definitions of $P_T$, $Q_T$ and $V_T$, we can also write
\begin{align*}
\int_{\gamma_k}\frac{\omega_T}{V_T}&=-\int_{\gamma_k}dH_T=-\int_{\gamma_k}\Big[d\left(\ln A_T\right)+d\left(\ln B\right)+d\left(\ln C\right)\Big]\\&=2\sum_{j=1}^r\int_{\gamma_k}d\theta_j=\pm 4\pi s_k\neq 0\,,
        \end{align*}
where we have used that the functions $\ln A_T$ and $\ln B$ are smooth on $\gamma_k$ because $A_T$ and $B$ do not vanish there, and hence $\int_{\gamma_k}d\left(\ln A_T\right)=0=\int_{\gamma_k}d\left(\ln B\right)$. The number $s_k$ in the last equality is the number of primary cycles contained in $D_{C_k}$, which is at least $1$ (it is $1$ if and only if $C_k$ is primary). The sign of the last expression depends on how we orient the circle $C_k$. Since this formula contradicts Eq.~\eqref{eqiom}, we conclude that all the periodic orbits $C_k$ are limit cycles of $X_T$. Moreover, since the vanishing order of the inverse integrating factor $V_T$ on each cycle $C_k$ is $1$, it follows that all the limit cycles are hyperbolic~\cite{enciso peralta 2009,GGG10}.

To conclude the proof of the theorem, let us show that each constant $\tau_k$ can be chosen in such a way that the period of the limit cycle $C_k$ is $T_k$.  Indeed, since $X_T|_{C_k}=\tau_kX_{LR}|_{C_k}$ by Eq.~\eqref{eqxTck}, denoting by $T^{LR}_k$ the period of the limit cycle $C_k$ for the vector field $X_{LR}$, we have that the orbit $C_k$ has period $T_k$ if we choose
\[
\tau_k=\frac{T^{LR}_k}{T_k}\,.
\]
The theorem then follows.
\end{proof}

\section{A planar polynomial vector field with prescribed limit cycles, multiplicities and periods}\label{section multiplicity}
We first prove that, for any configuration of cycles $\Gamma$, we can construct a planar polynomial vector field that realizes it as a set of limit cycles with prescribed multiplicities. As in Section~\ref{section periods}, the method of the proof is modeled upon the construction of Llibre and Rodr\'iguez~\cite{Llibre Rodriguez 2004}.

\begin{theorem}\label{teorema multiplicity}
Let $\Gamma=\{C_1,\ldots,C_n\}$ be a configuration of cycles and $\{m_1,\ldots,m_n\}$ a set of positive integers, then $\Gamma$ is realized by a planar polynomial vector field $X_m$ with $\deg(X_m)<2(r+\sum m_k)$, where each periodic orbit $C_k$ has multiplicity $m_k$. Here $r$ is the number of primary cycles in $\Gamma$. Moreover, $X_m$ admits a polynomial inverse integrating factor, it is Darboux integrable and all its limit cycles are algebraic.
\end{theorem}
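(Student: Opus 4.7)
The strategy is to adapt the construction of Theorem~\ref{teorema period} so that the polynomial inverse integrating factor vanishes to the prescribed order $m_k$ on each $C_k$. The prescribed multiplicity will then follow automatically, since by~\cite{enciso peralta 2009,GGG10} the multiplicity of a limit cycle equals the vanishing order on it of a smooth inverse integrating factor. Accordingly, I take $V_m := A_m B$ with $A_m := \prod_{k=1}^n f_k^{m_k}$, which has degree $2\sum_k m_k + 2r$, matching the bound in the statement.

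The naive ansatz $H_m := \ln(A_m BC)$ fails: with $X_m := -V_m\partial_y H_m\,\partial_x + V_m\partial_x H_m\,\partial_y$, the resulting vector field vanishes identically on $C_k$ whenever $m_k \ge 2$, because the $1/f_k$ pole of $\partial_x\ln A_m$ is too weak to compensate for the $f_k^{m_k}$ factor in $V_m$. To generate a singularity of the correct order, I would replace the $\ln f_k$ terms by the antiderivative of $1/f_k^{m_k}$,
\[
\phi_m(t) := \begin{cases} \ln t & \text{if } m = 1, \\ -\dfrac{1}{(m-1)\,t^{m-1}} & \text{if } m \ge 2, \end{cases}
\]
and take $H_m := \sum_{k=1}^n \phi_{m_k}(f_k) + \ln B + \ln C$, with Darboux first integral $D_m := \exp(H_m)$: each $\exp\phi_{m_k}(f_k)$ is either $f_k$ or of the form $e^{g/h}$ with $g,h$ polynomial. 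Setting $P_m := -V_m\,\partial_y H_m$ and $Q_m := V_m\,\partial_x H_m$, the key identity $V_m\,\partial_x f_k/f_k^{m_k} = \mu_k^{(m)} B\,\partial_x f_k$ with $\mu_k^{(m)} := \prod_{j\ne k} f_j^{m_j}$, together with~\eqref{idC}, yields explicit polynomial expressions for $P_m$ and $Q_m$ of degree at most $2(r + \sum_k m_k)-1$.

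The rest of the proof follows the template of Theorem~\ref{teorema period}: $V_m$ is an inverse integrating factor because $X_m/V_m$ is divergence-free off $V_m^{-1}(0)$, so all limit cycles lie inside $V_m^{-1}(0) = \Gamma \cup \{p_1,\ldots,p_r\}$; each $C_k$ is a periodic orbit because on $C_k$ only the $k$-th summand of $V_m\nabla H_m$ survives and equals the nonvanishing vector $\mu_k^{(m)}|_{C_k}\,B|_{C_k}\,\nabla f_k|_{C_k}$ (the contributions from $\phi_{m_j}$ with $j\ne k$, from $\ln B$ and from $\ln C$ are all killed by the $f_k^{m_k}$ factor in $V_m$); and each $C_k$ is isolated rather than part of a period annulus, via the integral obstruction $\int_{\gamma_k}\omega_m/V_m = -\int_{\gamma_k} dH_m = \pm 4\pi s_k \ne 0$ on a nearby closed orbit $\gamma_k$, since $\phi_{m_k}(f_k)$ and $\ln B$ are single-valued there. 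The prescribed multiplicity is then immediate from~\cite{enciso peralta 2009,GGG10}.

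The main obstacle I anticipate is the step showing $X_m$ does not vanish on $C_k$: the logarithmic ansatz collapses each $C_k$ with $m_k\ge 2$ to a curve of zeros of $X_m$ rather than a periodic orbit. The choice of $\phi_m$ as the antiderivative of $t^{-m}$ is precisely what makes $V_m\,\partial_x\phi_{m_k}(f_k) = \mu_k^{(m)} B\,\partial_x f_k$ both polynomial and non-vanishing on $C_k$, reinstating each circle as a genuine periodic orbit of the required multiplicity while keeping $D_m$ Darboux.
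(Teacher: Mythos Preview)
Your proposal is correct and follows the same overall strategy as the paper: take $V_m=A_mB$ as a polynomial inverse integrating factor, show that the only candidate limit cycles are the circles $C_k$, rule out a period annulus via the integral obstruction $\int_{\gamma_k}\omega_m/V_m=\pm4\pi s_k$, and read off the multiplicities from the vanishing order of $V_m$ using~\cite{enciso peralta 2009,GGG10}. The explicit vector field you build, however, is slightly different from (and cleaner than) the paper's. The paper keeps the Llibre--Rodr\'iguez function $H=\ln(ABC)$ and sets $P_m=-A_mB\,\partial_yH-BF$, $Q_m=A_mB\,\partial_xH+BG$, where $F=\Lambda\sum_k\lambda_k\,\partial_yf_k$, $G=\Lambda\sum_k\lambda_k\,\partial_xf_k$ are polynomial correction terms weighted by the constant $\Lambda=\sum_k(m_k-1)$; these are precisely what prevent $X_m$ from vanishing on $C_k$ when $m_k\ge2$, producing on $C_k$ the extra factor $\Lambda+f_k^{m_k-1}|_{C_k}$. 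You instead absorb the correction directly into the modified ``Hamiltonian'' $H_m=\sum_k\phi_{m_k}(f_k)+\ln(BC)$, so that $X_m=A_mB\,(-\partial_yH_m,\partial_xH_m)$ with no additional terms and $X_m|_{C_k}=B\,\mu_k^{(m)}(-\partial_yf_k,\partial_xf_k)|_{C_k}$. Both constructions share the same inverse integrating factor, the same Darboux structure (your $D_m$ and the paper's $\mathcal D=ABC\exp(\Lambda\sum_kh_k)$ are built from the same ingredients $h_k=\phi_{m_k}(f_k)$), and both reduce to $X_{LR}$ when all $m_k=1$; your version simply avoids the auxiliary constant $\Lambda$ and the separate $F,G$ terms.
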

\begin{proof}
Arguing as in the proof of Theorem~\ref{teorema period}, we can assume that $\Gamma$ is a set of $n$ circles $C_k=\{f_k(x,y)=0\}$, see Eq.~\eqref{eq:circles} for the definition of $f_k$, where the first $r$ ones are the primary cycles and no point $p_k$ lies on any $C_j$.

We consider the auxiliary functions $A,B,C$ defined in Section~\ref{section periods}, as well as the functions:
\begin{multicols}{2}\noindent
\begin{align*}
    &A_m:=\prod_{k=1}^n f_k^{m_k}\,,\\
    &D:=ABC\,,\\
    &\mathcal{D}:=D\exp\left(\Lambda\sum_{k=1}^n h_k\right)\,,\\
    &H:=\ln{D}\,,\\
    &\lambda_k:=\prod_{j\neq k}^n f_j^{m_j}\,,\\
    &F:=\Lambda\sum_{k=1}^n\lambda_k\frac{\partial f_k}{\partial y}\,,\\
    &G:=\Lambda\sum_{k=1}^n\lambda_k\frac{\partial f_k}{\partial x}\,,
\end{align*}
\end{multicols}
\noindent where $\Lambda$ is a constant defined as
$$\Lambda:=\sum_{k=1}^n(m_k-1)\,,$$ 
which vanishes if and only if $m_k=1$ for every $k=1,\cdots,n$, and 
\begin{align*}
h_k:=\begin{cases}
\dfrac{f_k^{1-m_k}}{(1-m_k)}& \text{if } m_k\geq2\,,\\
\ln f_k& \text{if }m_k=1\,.
\end{cases}
\end{align*}

We claim that the vector field $X_m$ in the statement of the theorem can be defined using these functions as:
\[
X_m:=P_m\partial_x+Q_m\partial_y\qquad\text{with}\qquad\begin{array}{l}
P_m:=-A_mB\dfrac{\partial H}{\partial y}-BF\,,\\[2ex]
Q_m:=A_mB\dfrac{\partial H}{\partial x}+BG\,.
\end{array}
\]

Observe that if $m_k=1$ for all $k$, then $A_m=A$ and $F=0=G$, thus we have $X_m|_{m_k=1}=X_{LR}$, where $X_{LR}$ is the vector field constructed by Llibre and Rodr\'{i}guez in~\cite{Llibre Rodriguez 2004}.

To prove that $X_m$ satisfies the desired properties, let us show that it is a polynomial vector field with $\deg(X_m)<2(r+\sum m_k)$, that $\mathcal{D}$ is a Darboux first integral and that $V_m:=A_mB$ is an inverse integrating factor. Indeed, by direct computations we obtain that $P_m$ and $Q_m$ can be written as
\begin{align}
P_m=P_{LR}\prod_{k=1}^n f_k^{m_k-1}-BF\,,\label{eqpm}\\
Q_m=Q_{LR}\prod_{k=1}^n f_k^{m_k-1}+BG\,.\label{eqqm}
\end{align}
From these expressions it follows that $P_m$ and $Q_m$ are polynomials of degree at most $2(n+\sum m_k)-1$. Moreover, using the identities:
\[\frac{\partial\mathcal{D}}{\partial x}=\mathcal{D}\left(\frac{1}{D}\frac{\partial D}{\partial x}+\frac{G}{A_m}\right)\,,\qquad \frac{\partial\mathcal{D}}{\partial y}=\mathcal{D}\left(\frac{1}{D}\frac{\partial D}{\partial y}+\frac{F}{A_m}\right)\,,\]
one can easily check that $\mathcal{D}$ is a Darboux first integral of $X_m$. It is also straightforward to show that the vector field $\frac{X_m}{V_m}$ is divergence-free in $\mathbb R^2 \setminus V_m^{-1}(0)$, thus implying that $V_m$ is an inverse integrating factor of $X_m$.

Arguing as in the proof of Theorem~\ref{teorema period}, we conclude that the circles $\{C_1,\cdots,C_n\}$ are invariant under the flow of $X_m$, and that if $X_m$ has a limit cycle, it has to be precisely one of these circles. Let us now prove that all of them are limit cycles.

First we check that $X_m$ does not vanish on each $C_k$. Using Eqs.~\eqref{eqpm} and~\eqref{eqqm} we can write
\begin{align}\label{eqxmck}
X_m|_{C_k}=B|_{C_k}\lambda_k|_{C_k}\left(\Lambda+f_k^{m_k-1}|_{C_k}\right)\left(-\frac{\partial{f_k}}{\partial y}\Big|_{C_k}\partial_x+\frac{\partial f_k}{\partial x}\Big|_{C_k}\partial_y\right)\,.
\end{align}
Notice that the functions $B$, $\lambda_k$ and $\Lambda+f_k^{m_k-1}$ do not vanish on $C_k$ because all the circles in $\Gamma$ are disjoint, no point $p_j$ is contained in a circle $C_k$ and $\Lambda>0$ unless $m_j=1$ for all $j$. In the case that all the limit cycles have multiplicity $1$, it follows that $\Lambda+f_k^{m_k-1}=1$. Moreover, the gradient of $f_k$ only vanishes at the point $p_k$, so we infer that $X_m$ has no zeros on $C_k$, which is then a periodic orbit.

Since the vector field $X_m$ is polynomial, if $C_k$ is not a limit cycle then it must belong to a period annulus. Let us assume that this is the case, and take a periodic orbit $\gamma_k$ close enough to $C_k$ so that it is disjoint from the set $\Gamma$ and all the points $p_j$. In particular, we have that the function $V_m|_{\gamma_k}$ does not vanish. Defining the $1$-form $\omega_m:=-Q_mdx+P_mdy$, we have that
\begin{equation}\label{eqiomm}
\int_{\gamma_k}\frac{\omega_m}{V_m}=0
\end{equation}
because $\gamma_k$ is a periodic orbit of $X_m$. Using the definitions of $P_m$, $Q_m$ and $V_m$, and proceeding as in the proof of Theorem~\ref{teorema period}, we can also write
\begin{align*}
\int_{\gamma_k}\frac{\omega_m}{V_m}&=-\int_{\gamma_k}\left[dH+\Lambda\sum_{j=1}^n dh_j\right]=\\
 &=-\int_{\gamma_k}\Big[d\left(\ln A\right)+d\left(\ln B\right)+d\left(\ln C\right)\Big]=\pm 4\pi s_k\neq 0\,.
\end{align*}
To obtain this formula we have used that the functions $\ln A$, $\ln B$ and $h_j$ are smooth on $\gamma_k$ because $A$, $B$ and $f_j$ do not vanish there. The number $s_k$ in the last equality is the number of primary cycles contained in $D_{C_k}$, which is at least $1$ by definition. Since this formula contradicts Eq.~\eqref{eqiomm}, we deduce that all the periodic orbits $C_k$ are limit cycles of $X_m$.

To conclude the proof of the theorem, we notice that, by construction, the vanishing order of the inverse integrating factor $V_m$ on the limit cycle $C_k$ is $m_k$. Since the multiplicity of a limit cycle is equal to the vanishing order of the inverse integrating factor~\cite{enciso peralta 2009,GGG10}, it follows that the multiplicity of $C_k$ is $m_k$, in particular $C_k$ is hyperbolic if and only if $m_k=1$.
\end{proof}

Combining the constructions in Theorems~\ref{teorema period} and~\ref{teorema multiplicity}, and arguing exactly as in their proofs, it is easy to check that the vector field $X_{Tm}$ defined as
\[
X_{Tm}:=P_{Tm}\partial_x+Q_{Tm}\partial_y\,,\qquad\text{with}\qquad\begin{array}{l}
P_{Tm}:=-A_mB\dfrac{\partial H_T}{\partial y}-BF_T\,,\\[2ex]
Q_{Tm}:=A_mB\dfrac{\partial H_T}{\partial x}+BG_T\,,
\end{array}
\]
realizes the set of cycles $\Gamma$ with prescribed periods and multiplicities. Here the functions $B$ and $H_T$ were defined in the proof of Theorem~\ref{teorema period}, $A_m$ in the proof of Theorem~\ref{teorema multiplicity} and we set 
\begin{align*}
&F_T:=\Lambda\sum_{k=1}^n\tau_k\lambda_k\frac{\partial f_k}{\partial y}\,,\\
&G_T:=\Lambda\sum_{k=1}^n\tau_k\lambda_k\frac{\partial f_k}{\partial x}\,.
\end{align*}
The constants $\tau_k$ are chosen so that the limit cycle $C_k$ has period $T_k$. Indeed, noticing that
\begin{equation}\label{eq:xtm}
X_{Tm}|_{C_k}=\tau_k\Big(\Lambda+f_k^{m_k-1}|_{C_k}\Big)\prod_{j\neq k}f_j^{m_j-1}|_{C_k}X_{LR}|_{C_k}\,,
\end{equation}
if we denote by $\gamma^{LR}_k(t)$ the integral curve parametrizing $C_k$ of the Llibre-Rodriguez vector field $X_{LR}$ realizing $\Gamma$, the constant $\tau_k$ is chosen as
$$
\tau_k=\frac{1}{T_k}\int_0^{T_k^{LR}}\frac{dt}{[\Lambda+f_k^{m_k-1}(\gamma_k^{LR}(t))]\prod_{j\neq k}f_j^{m_j-1}(\gamma_k^{LR}(t))}\,,
$$
thus implying that the period of $C_k$ for the vector field $X_{Tm}$ is $T_k$. Here $T_k^{LR}$ is the period of the integral curve $\gamma^{LR}_k(t)$. The theorem can then be stated as follows:

\begin{theorem}\label{teorema multiplicity and period}
Let $\Gamma=\{C_1,\ldots,C_n\}$ be a configuration of cycles, $\{m_1,\ldots,m_n\}$ a set of positive integers and $\{T_1,\ldots,T_n\}$ a set of positive constants, then $\Gamma$ is realized by a planar polynomial vector field $X_{Tm}$ with $\deg(X_{Tm})<2(r+\sum m_k)$, where each periodic orbit $C_k$ has multiplicity $m_k$ and period $T_k$. Here $r$ is the number of primary cycles in $\Gamma$. Moreover, $X_{Tm}$ admits a polynomial inverse integrating factor, it is Darboux integrable and all its limit cycles are algebraic.
\end{theorem}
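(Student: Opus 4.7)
The plan is to mirror the arguments of Theorems~\ref{teorema period} and~\ref{teorema multiplicity} applied to the combined vector field $X_{Tm}$, exploiting the fact that $X_{Tm}$ reduces to $X_T$ when $m_k\equiv 1$ and to $X_m$ when $\tau_k\equiv 1$. First I would verify that $X_{Tm}$ is polynomial with $\deg(X_{Tm})<2(r+\sum m_k)$: the potentially singular $f_k^{-1}$ contributions coming from $\partial\ln A_T$ in the derivatives of $H_T$ are absorbed by the $f_k^{m_k}$ factors in $A_m$, so a direct bookkeeping computation analogous to those producing \eqref{eqpt}--\eqref{eqqt} and \eqref{eqpm}--\eqref{eqqm} delivers a polynomial of the claimed degree. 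Next I would check that $\mathcal{D}_T:=D_T\exp(\Lambda\sum_k h_k)$ is a Darboux first integral and that $V_{Tm}:=A_m B$ is a polynomial inverse integrating factor of $X_{Tm}$: both identities follow by combining the computations of the two preceding theorems, since the $\tau_k$ enter linearly through $H_T$, $F_T$ and $G_T$, while the $m_k$ enter through $A_m$, the $h_k$ and $F_T,G_T$.

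Having fixed the polynomial structure, the location of the limit cycles is determined: by \cite{GLV96}, every limit cycle must lie in $V_{Tm}^{-1}(0)=\Gamma\cup\{p_1,\ldots,p_r\}$, and hence must be one of the circles $C_k$. I would then verify that each $C_k$ is a periodic orbit by checking that $X_{Tm}|_{C_k}$ does not vanish, which is precisely the content of \eqref{eq:xtm}: the right-hand side is a product of nonzero scalar factors times $X_{LR}|_{C_k}$, and $X_{LR}|_{C_k}$ was already shown to be nonvanishing in Theorem~\ref{teorema period}. To exclude the possibility that some $C_k$ lies in a period annulus, I would repeat the loop-integral argument on a nearby periodic orbit $\gamma_k$ disjoint from $\Gamma$ and the points $p_j$: the 1-form $\omega_{Tm}/V_{Tm}$ reduces to $-d(H_T+\Lambda\sum_j h_j)$, and since $\ln A_T$, $\ln B$ and the $h_j$ are single-valued and smooth near $\gamma_k$, its integral equals $-2\sum_{j=1}^r\oint_{\gamma_k}d\theta_j=\pm 4\pi s_k\neq 0$, contradicting the periodicity of $\gamma_k$. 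The multiplicity assertion then follows from \cite{enciso peralta 2009,GGG10}, since $V_{Tm}=A_m B$ vanishes to order exactly $m_k$ on $C_k$.

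It only remains to prescribe the periods. The identity \eqref{eq:xtm} shows that the restriction of $X_{Tm}$ to $C_k$ is $X_{LR}|_{C_k}$ multiplied by a nowhere-vanishing scalar function of position, with overall factor $\tau_k$; hence the integral curve of $X_{Tm}$ on $C_k$ is a time-reparametrization of $\gamma_k^{LR}(t)$, and computing the ratio of the periods by a change of variable yields precisely the formula for $\tau_k$ displayed in the excerpt, producing period $T_k$. The main obstacle I anticipate is purely computational: verifying the polynomial character and the Darboux/inverse-factor identities requires the manipulations of the two preceding theorems done simultaneously in both parameter families, and one must confirm that the linear-in-$\tau_k$ and linear-in-$(m_k-1)$ contributions really combine cleanly rather than producing unwanted cross terms.
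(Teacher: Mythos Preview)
Your proposal is correct and follows the same route as the paper, which simply states that one argues ``exactly as in the proofs'' of Theorems~\ref{teorema period} and~\ref{teorema multiplicity} for the combined field $X_{Tm}$; you have just spelled out those steps. One small slip: the Darboux first integral should carry the $\tau_k$ in the exponential, i.e.\ $\mathcal{D}_T=D_T\exp\bigl(\Lambda\sum_k\tau_k h_k\bigr)$, since $G_T/A_m=\Lambda\sum_k\tau_k\,\partial_x h_k$ (and likewise for $F_T$); compare the explicit first integral written in the proof of Theorem~\ref{main theorem}. This correction propagates to your 1-form identity $\omega_{Tm}/V_{Tm}=-d\bigl(H_T+\Lambda\sum_j\tau_j h_j\bigr)$, but the loop-integral argument is unaffected because each $\tau_j h_j$ is still single-valued on $\gamma_k$.
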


\section{A planar polynomial vector field with prescribed limit cycles, periods and stabilities}\label{section stability}

The goal of this section is to prove that, for any configuration of cycles $\Gamma$, we can construct a planar polynomial vector field that realizes it as a set of hyperbolic limit cycles with prescribed stabilities and periods. To this end, we first show that the stability of each limit cycle $C_k$ of the polynomial vector field $X_T$ constructed in Section~\ref{section periods} can be characterized in terms of the relative position of $C_k$ with respect to the other cycles. In what follows, we will say that the limit cycle $C_k$ has stability $-1$ if it is stable and $1$ if it is unstable. Since all the limit cycles considered in this section are hyperbolic, the interior and exterior stabilities are the same; the case of semistable limit cycles will be addressed in Section~\ref{section main theorem}.

\begin{lemma}\label{lemma stability m=1}
The limit cycle $C_k$ of the vector field $X_T$ introduced in Section~\ref{section periods} has stability $(-1)^{N_k}$, where $N_k:=\text{card}\,\{C_j: C_k\subset D_{C_j}, j\neq k\}$.  
\end{lemma}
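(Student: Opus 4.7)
The plan is to read off the stability from the derivative at $0$ of the Poincar\'e return map on a transversal to $C_k$, using the Darboux first integral $H_T$ and the monodromy computation that already appeared in the proof of Theorem~\ref{teorema period}. The geometric intuition is that $H_T$ is preserved along trajectories but is multivalued because of the angular terms $-2\sum_l \theta_l$, and the mismatch between continuous tracking and a fixed branch is exactly what drives the Poincar\'e map to a nontrivial linearisation.

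First I use Eq.~\eqref{eqxTck} to observe that in polar coordinates $(\rho,\theta)$ centered at $p_k$ one has $X_T|_{C_k} = 2\tau_k\,(\mu_k B)|_{C_k}\,\partial_\theta$, so $\mathrm{sign}(\dot\theta|_{C_k}) = \mathrm{sign}(\mu_k|_{C_k}) = (-1)^{N_k}$; the last equality is because $f_j|_{C_k} < 0$ exactly when $C_k \subset D_{C_j}$. Next, I fix a transversal $\Sigma = \{\theta = \theta_*\}$ parametrised by $\epsilon = \rho - r_k$, let $\Pi_k\colon\Sigma\to\Sigma$ be the Poincar\'e map, and integrate $dH_T$ along the trajectory arc from $(r_k + \epsilon,\theta_*)$ to $(r_k+\Pi_k(\epsilon),\theta_*)$. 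Since $X_T\cdot\nabla H_T = 0$ the line integral vanishes when $H_T$ is tracked continuously, but evaluating $H_T$ at the two endpoints with a fixed branch of each $\theta_l$ and accounting for the winding of the trajectory arc around the primary points $p_l \in D_{C_k}$ (exactly as in the proof of Theorem~\ref{teorema period}) yields
\[
H_T(r_k+\Pi_k(\epsilon),\theta_*) - H_T(r_k+\epsilon,\theta_*) = 4\pi\,s_k\,(-1)^{N_k}\,.
\]

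Finally I isolate the singular part of $H_T$ at the cycle: writing $H_T = \tau_k\ln f_k + h$ with $h$ smooth near $(r_k,\theta_*)$, and using $f_k \approx 2 r_k \epsilon$, the identity above becomes
\[
\tau_k\bigl(\ln|\Pi_k(\epsilon)| - \ln|\epsilon|\bigr) + O(\epsilon) = 4\pi\,s_k\,(-1)^{N_k}\,,
\]
so as $\epsilon\to 0$ the Poincar\'e multiplier is
\[
\Pi_k'(0) = \exp\!\Bigl(\tfrac{4\pi\,s_k\,(-1)^{N_k}}{\tau_k}\Bigr)\,,
\]
which is less than $1$ iff $N_k$ is odd. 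Since $C_k$ is hyperbolic by Theorem~\ref{teorema period}, its stability is $(-1)^{N_k}$, as claimed.

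The main delicate point, and what I expect to require the most care in a full write-up, is the monodromy step: one has to check that on the fixed-branch side only the multivalued contribution $-2\sum\theta_l$ produces a nonzero jump (the smooth pieces $\tau_j\ln|f_j|$ with $j\neq k$ and $\ln B$ cancel between the two endpoints up to $O(\epsilon)$ because they live on the same transversal $\Sigma$), and that the winding number of the nearly-closed trajectory arc around each $p_l\in D_{C_k}$ is indeed $(-1)^{N_k}$, in agreement with the direction of $\dot\theta|_{C_k}$. Once these points are handled, the formula for $\Pi_k'(0)$ and hence the stability drop out of the singular behaviour $\tau_k\ln f_k$ of $H_T$ at $C_k$.
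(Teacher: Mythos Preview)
Your argument is correct and arrives at exactly the same formula $\Pi_k'(0)=\exp\bigl(4\pi s_k(-1)^{N_k}/\tau_k\bigr)$ as the paper, but by a genuinely different route. The paper does not work on a nearby orbit arc at all: it stays on $C_k$ and computes the characteristic exponent $\mathcal L=\int_0^{T_k}\Div X_T(\gamma_k(t))\,dt$ directly. The key move there is the inverse integrating factor identity $X_T\cdot\nabla V_T=V_T\,\Div X_T$, which on $C_k$ reduces $\Div X_T$ to $X_T\cdot\nabla\ln f_k$ (the factors $\ln B$ and $\ln f_j$ for $j\neq k$ integrate to zero by periodicity); this is then converted, using the vanishing of $X_T\cdot\nabla H_T$, into $-\tau_k^{-1}X_T\cdot\nabla\ln C=2\tau_k^{-1}\sum_j\dot\theta_j$, whose integral over one period is read off from the orientation computation you also carry out.

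So you exploit the multivaluedness of $H_T$ on a transversal to get $\Pi_k'(0)$ from the singular term $\tau_k\ln f_k$, whereas the paper exploits the integrating factor on $C_k$ to get $\mathcal L$ from the multivalued term $\ln C$. Your approach is self-contained in that it uses only the first integral and the monodromy computation already present in Theorem~\ref{teorema period}, at the price of the branch/winding bookkeeping you flag; the paper's approach is shorter once one invokes $\Pi_k'(0)=e^{\mathcal L}$ and the integrating factor identity, and it never leaves the limit cycle itself. Both are standard techniques and the orientation step $\mathrm{sign}(\mu_k|_{C_k})=(-1)^{N_k}$ is identical in the two arguments.
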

\begin{proof}
Denoting by $\gamma_k(t)$, $t\in\R$, the integral curve of $X_T$ whose image is the limit cycle $C_k$, we first compute the derivative of the angular variable $\theta_j$ defined in Eq.~\eqref{thetak} for each $j$: 
\begin{align}
      \frac{d\theta_j(\gamma_k(t))}{dt}&=X_T(\gamma_k(t))\cdot \nabla\theta_j(\gamma_k(t))\nonumber\\&=2\tau_k B(\gamma_k(t))\mu_k(\gamma_k(t))\,\frac{(x-x_k)(x-x_j)+(y-y_k)(y-y_j)}{(x-x_j)^2+(y-y_j)^2}\Big|_{\gamma_k(t)}\,,
   \end{align}
which is negative (positive) if the orientation of $C_k$ induced by the integral curve $\gamma_k(t)$ is clockwise (counterclockwise). The constants $\tau_k$ and the functions $B$ and $\mu_k$ were defined in the proof of Theorem~\ref{teorema period}. Assuming that the point $(x_j,y_j)$ is contained in the interior of the compact set $D_{C_k}$, it easily follows that
\begin{equation}\label{eqconvex}
\frac{(x-x_k)(x-x_j)+(y-y_k)(y-y_j)}{(x-x_j)^2+(y-y_j)^2}\Big|_{\gamma_k(t)}>0\,.
\end{equation}
Accordingly, since $B$ is always positive over $C_k$, the sign of $\frac{d\theta_j(\gamma_k(t))}{dt}$ is given by the sign of $\mu_k$ over $C_k$. Noticing that the function $f_j=(x-x_j)^2+(y-y_j)^2-r_j^2$ is positive over $C_k$ if and only if $C_k$ is not contained in $D_{C_j}$, the definition of $\mu_k$ implies that its sign is precisely $(-1)^{N_k}$.

The stability of the limit cycle $C_k$ is given by the sign of the following integral~\cite{dumortier2006qualitative}
$$
\mathcal L:=\int_0^{T_k}\Div X_T(\gamma_k(t))dt\,.
$$ 
Using the identity $X_T\cdot{}\nabla V_T=V_T\Div X_T$ we obtain
    \begin{align*}
\mathcal L&=\int_{0}^{T_k}X_T(\gamma_k(t))\cdot\nabla\ln A(\gamma_k(t))dt\\&+\int_{0}^{T_k}X_T(\gamma_k(t))\cdot\nabla\ln B(\gamma_k(t))dt\\&=\int_{0}^{T_k}X_T(\gamma_k(t))\cdot\nabla\ln f_k(\gamma_k(t))dt\,,
    \end{align*}
where to pass to the second equality we have used that the functions $\ln B$ and $\ln f_j$ with $j\neq k$ are smooth on $C_k$, and therefore the corresponding integrals vanish. Using the definition of $X_T$, the identities~\eqref{idC} and the value of $X_T$ on $C_k$ computed in~\eqref{eqxTck}, after a few straightforward computations we can write
\begin{align*}
\mathcal L&= \frac{-1}{\tau_k}\int_{0}^{T_k}X_T(\gamma_k(t))\cdot \nabla \ln C(\gamma_k(t))\\&+\frac{-1}{\tau_k}\int_{0}^{T_k}X_T(\gamma_k(t))\cdot \nabla \ln \Big(B\prod_{j\neq k}f_j^{\tau_j}\Big)(\gamma_k(t))\,.
\end{align*}
As before, the second integral in this expression vanishes because the function $B\prod_{j\neq k}f_j^{\tau_j}$ is smooth on $C_k$. Finally, from the definition of the function $C$ we obtain
$$
\mathcal L=\frac{2}{\tau_k}\sum_{j=1}^r\int_0^{T_k}\frac{d\theta_j(\gamma_k(t))}{dt}dt=\frac{(-1)^{N_k}4\pi s_k}{\tau_k}\,,
$$
where $s_k\in\{1,\ldots,r\}$ is the number of primary cycles contained in $D_{C_k}$, and we have used the sign $(-1)^{N_k}$ computed before (observe that there is a non vanishing contribution to the integral above if and only if $\theta_j$ is the angle whose center $(x_j,y_j)$ is contained in $D_{C_k}$, so that Eq.~\eqref{eqconvex} holds). We then conclude that the stability of $C_k$ is $(-1)^{N_k}$, as we wanted to show.
\end{proof}

This lemma proves that the stability of the limit cycle $C_k$ of $X_T$ is fixed by the configuration of cycles that we want to realize. In the following theorem, which is the main result of this section, we show how to modify the vector field $X_T$ in order to prescribe the stabilities of its limit cycles. The idea is to add additional cycles to the configuration $\Gamma$ to obtain a new configuration $\tilde\Gamma$ so that $\tilde N_k$ for the new configuration has the desired sign. Since we do not want to realize the extra cycles $\tilde\Gamma\backslash\Gamma$, we can remove them by adding a singular (zero) point over each extra limit cycle of the vector field $\tilde X_T$ realizing $\tilde \Gamma$.

\begin{theorem}\label{teorema stability}
Let $\Gamma=\{C_1,\ldots,C_n\}$ be a configuration of cycles, $\{T_1,\ldots,T_n\}$ a set of positive constants and $\{\nu_1,\ldots,\nu_n\}$ a set of $\pm1$. Then $\Gamma$ is realized by a planar polynomial vector field  $X_{Ts}$ with $\deg(X_{Ts})<2(3n+r)$, where each periodic orbit $C_k$ is hyperbolic, has period $T_k$ and stability $\nu_k$. Here $r$ is the number of primary cycles in $\Gamma$. Moreover, $X_{Ts}$ admits a polynomial inverse integrating factor, it is Darboux integrable and all its limit cycles are algebraic.
\end{theorem}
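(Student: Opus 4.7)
The plan is to follow the roadmap sketched in the paragraph before the statement: enlarge $\Gamma$ to a configuration $\tilde\Gamma\supset\Gamma$ for which the stabilities produced by Lemma~\ref{lemma stability m=1} already equal the prescribed $\nu_k$ on the original cycles, apply Theorem~\ref{teorema period} to $\tilde\Gamma$, and finally destroy each spurious limit cycle in $\tilde\Gamma\setminus\Gamma$ by multiplying the resulting vector field by a polynomial that places a single zero on it.

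To build $\tilde\Gamma$ I first reduce to disjoint circles as in the previous proofs, and then process the cycles from outermost to innermost in the nesting partial order. Whenever the running value of $(-1)^{\tilde N_k}$ does not match $\nu_k$, I insert an auxiliary circle just outside $C_k$ but still contained in every $C_\ell$ originally containing $C_k$, taking the annulus thin enough that no other cycle of $\tilde\Gamma$ lies in it. Such an insertion increases by one the value of $\tilde N_j$ for $j=k$ and for every cycle lying in $D_{C_k}$, while leaving $\tilde N_\ell$ untouched for every cycle already processed. An induction from the outside in then yields $(-1)^{\tilde N_k}=\nu_k$ for all $k=1,\dots,n$. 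At most $n$ new circles are introduced, so $|\tilde\Gamma|\le 2n$; moreover the primary cycles of $\tilde\Gamma$ coincide with those of $\Gamma$ since new circles always enclose some existing cycle and are never placed strictly inside a primary one, so $\tilde r=r$.

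Applying Theorem~\ref{teorema period} to $\tilde\Gamma$ produces a polynomial vector field $\tilde X_T$ with polynomial inverse integrating factor $\tilde V_T$ realising $\tilde\Gamma$ as its set of hyperbolic limit cycles with any prescribed periods. Enumerating the spurious cycles as $\tilde C_1,\dots,\tilde C_M$ (with $M\le n$), I pick a point $q_j=(a_j,b_j)\in\tilde C_j$ disjoint from every other curve and set
\[
h(x,y):=\prod_{j=1}^M\bigl[(x-a_j)^2+(y-b_j)^2\bigr],\qquad X_{Ts}:=h\,\tilde X_T.
\]
Since $h$ is nonnegative and vanishes only at the points $q_j$, the trajectories of $X_{Ts}$ agree with those of $\tilde X_T$ off this finite set; each $\tilde C_j$ now contains the singular point $q_j$ and hence ceases to be a periodic orbit (it becomes a homoclinic loop), while each $C_k$ remains a periodic orbit with the same stability as for $\tilde X_T$, because $h$ is strictly positive on $C_k$. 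A direct verification shows that the Darboux first integral of $\tilde X_T$ is still a first integral of $X_{Ts}$, that $h\,\tilde V_T$ is a polynomial inverse integrating factor of $X_{Ts}$, and that it vanishes to first order on each $C_k$, so the $C_k$ are hyperbolic. The periods are then fixed by choosing the constants $\tau_k$ of Theorem~\ref{teorema period} just as in its last step, now weighted by the extra factor $\int_0^{\tilde T_k^{LR}}h(\gamma_k^{LR}(t))^{-1}dt$ to compensate for the multiplication by $h$.

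A degree count gives $\deg(\tilde X_T)<2(|\tilde\Gamma|+\tilde r)\le 2(2n+r)$ and $\deg h\le 2n$, hence $\deg(X_{Ts})<2(3n+r)$, matching the bound in the statement. The step I expect to require the most care is the elimination of the spurious cycles: one must verify both that each $\tilde C_j$ is genuinely removed from the set of limit cycles and that the modification introduces no new limit cycles anywhere. Both assertions follow from the inverse integrating factor argument, since every limit cycle of $X_{Ts}$ is trapped inside $(h\tilde V_T)^{-1}(0)=\tilde\Gamma\cup\{p_1,\dots,p_r,q_1,\dots,q_M\}$, and within this set the $\tilde C_j$ each carry a zero of $X_{Ts}$ while the isolated points cannot themselves be limit cycles, leaving only $C_1,\dots,C_n$.
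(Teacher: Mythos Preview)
Your argument is correct and follows the same route as the paper: enlarge $\Gamma$ to $\tilde\Gamma$ so that Lemma~\ref{lemma stability m=1} yields the prescribed stabilities, apply Theorem~\ref{teorema period} to $\tilde\Gamma$, and then kill the extra cycles by multiplying by a nonnegative polynomial vanishing at one chosen point on each. The only difference is cosmetic, in how the auxiliary circles are selected: you insert them inductively, just outside each $C_k$ and only when needed, whereas the paper simply adjoins to every $C_k$ a concentric circle $C_{n+k}$ of radius $r_k-\nu_k\varepsilon$, which forces $\tilde N_k$ to have the right parity directly (each original containment $C_k\subset D_{C_j}$ contributes an even amount $2$ to $\tilde N_k$, and $C_{n+k}$ contributes $0$ or $1$ according to $\nu_k$) without any recursion, at the cost of always using exactly $n$ extra circles rather than at most $n$.
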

\begin{proof}
As in previous sections, we can assume that $\Gamma$ consists of circles. Take a circle $C_{n+k}$ centered at $p_k$ of radius $r_k-\varepsilon\nu_k$. Recall that $r_k$ is the radius of the circle $C_k$ and $p_k$ is its center. It is clear that we can take $\varepsilon>0$ small enough such that all the circles are disjoint and no $p_j$ lies on any $C_k$ and $C_{n+k}$. We denote the whole configuration by $\tilde{\Gamma}:=\{C_1,\cdots,C_{2n}\}$. Observe that the number of primary cycles and their centers remain unchanged (a primary cycle $C_k$ in $\Gamma$ with $\nu_k>0$ is no longer a primary cycle in $\tilde{\Gamma}$, instead $C_{n+k}$ will be primary, but it has the same center), and therefore the function $B$ will be the same for $\Gamma$ and $\tilde{\Gamma}$.

Now we construct a vector field $\tilde{X}_T$ as in Theorem~\ref{teorema period} realizing the $2n$ cycles of $\tilde{\Gamma}$ where each limit cycle $C_k$ has an associated constant $\tau_k$ that will be fixed later (cf. the proof of Theorem~\ref{teorema period} for the definition of such a constant) and $\tau_{n+k}=1$ so that the limit cycle $C_{n+k}$ has period $T_{n+k}^{LR}$, for $k\in\{1,\ldots,n\}$. It is obvious from the definition of $\tilde\Gamma$ that the parity of the number $\tilde N_k$ defined in Lemma~\ref{lemma stability m=1} only depends on the relative positions of $C_{n+k}$ and $C_k$, and that $(-1)^{\tilde N_k}=\nu_k$ for $k=1,\ldots,n$. The lemma then implies that the cycles $C_k$ have the desired stability. In order to remove the additional cycles $C_{n+k}$, we consider the functions:
\begin{align*}
  &l_k(x,y):=(x-a_k)^2+(y-b_k)^2\\
  &L_{Ts}:=\prod_{k=1}^n l_k
\end{align*}
where each $q_k:=(a_k,b_k)\in C_{n+k}$ is a point at the extra cycle $C_{n+k}$. Then the vector field:
\[X_{Ts}:=L_{Ts} \tilde{X}_T\]
satisfies the statements of the theorem. Indeed, since the factor $L_{Ts}$ is positive on $\Gamma$, this set is realized by $X_{Ts}$ as algebraic limit cycles, while the cycles $C_{n+k}$ contain a singular point of $X_{Ts}$, thus becoming homoclinic connections. The factor $L_{Ts}$ does not change the stability of each cycle $C_k$ of $\tilde X_T$, which is $\nu_k$. Moreover, if $\gamma^{LR}_k(t)$ is the integral curve parametrizing $C_k$ of the Llibre-Rodriguez vector field $X_{LR}$ that realizes $\tilde \Gamma$, see Section~\ref{section periods} for the definition, the constant $\tau_k$ must be chosen such that
$$
\tau_k=\frac{1}{T_k}\int_0^{T_k^{LR}}\frac{dt}{L_{Ts}(\gamma^{LR}_k(t))}\,,
$$
which implies that the period of $C_k$ for the vector field $X_{Ts}$ is $T_k$. Here $T_k^{LR}$ is the period of the integral curve $\gamma^{LR}_k(t)$.
Finally, observe that the degree of $X_{Ts}$ is $2(3n+r)$ because the vector field $\tilde X_T$ has degree $2(2n+r)$ and the factor $L_{Te}$ has degree $2n$. Additionally, $V_{Ts}:=ABL_{Ts}$ is an inverse integrating factor of $X_{Ts}$, and the Darboux first integral $\tilde H_T$ of $\tilde X_T$ is a first integral of $X_{Ts}$ as well. This completes the proof of the theorem.
\end{proof}

\section{The main theorem}\label{section main theorem}
In this section we construct a vector field $X$ that realizes a configuration of cycles $\Gamma$ with prescribed periods, stabilities and multiplicities, thus establishing the main theorem of the paper. As in Section~\ref{section stability} we shall use $\nu_k\in\{-1,1\}$ to denote the \emph{interior stability} that we want to prescribe for the limit cycle $C_k$ (negative means stable in the interior and positive is unstable). Observe that the \emph{exterior stability} of $C_k$ is determined by $\nu_k$ and the multiplicity $m_k$ as $(-1)^{m_k+1}\nu_k$. In the following lemma we show that the interior stability of each limit cycle $C_k$ of the polynomial vector field $X_{Tm}$ constructed in Section~\ref{section multiplicity} can be characterized in terms of the relative position of $C_k$ with respect to the other limit cycles and the set of multiplicities $\{m_1,\ldots,m_n\}$. In the case that $m_k=1$ for all $k$ we recover Lemma~\ref{lemma stability m=1}.

\begin{lemma}\label{lemma stability m>1}
The limit cycle $C_k$ of the vector field $X_{Tm}$ constructed in Section~\ref{section multiplicity} has interior stability $\nu_k=(-1)^{m_k+M_k+1}$, where
\[M_k:=\sum_{j\in\mathcal{M}_k}m_j\]
and $\mathcal{M}_k:=\{j\in\{1,\ldots,n\}\ /\ C_k\subset D_{C_j}, j\neq k\}$. 
\end{lemma}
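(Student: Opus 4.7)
The plan is to adapt the argument of Lemma~\ref{lemma stability m=1} to the case $m_k\geq 2$. For multiplicity $\geq 2$ the limit cycle is no longer hyperbolic, so the integral $\int_0^{T_k}\Div X_{Tm}\,dt$ vanishes identically and cannot determine the stability. Instead, I would analyze the Poincar\'e first return map of $C_k$ directly through the Darboux first integral of $X_{Tm}$.

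Introduce tubular coordinates $(s,u)$ near $C_k$ via polar coordinates centered at $p_k$, so that $u$ is the signed radial displacement with $\{u=0\}=C_k$ and $u<0$ inside $D_{C_k}$. By Theorem~\ref{teorema multiplicity and period} the multiplicity of $C_k$ equals $m_k$, so the Poincar\'e map has the form $\pi(u_0)=u_0+c\,u_0^{m_k}+O(u_0^{m_k+1})$ with $c\neq 0$. A direct sign check with $u_0<0$ shows that $\nu_k=-1$ (interior stable) iff $c\,u_0^{m_k}>0$ for $u_0<0$ small, which translates to $\nu_k=\sign(c)\,(-1)^{m_k+1}$. It therefore suffices to determine $\sign(c)$.

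Combining the constructions in the proofs of Theorems~\ref{teorema period} and~\ref{teorema multiplicity}, the function
\[
\mathcal{D}_{Tm}:=D_T\exp\!\left(\Lambda\sum_{k=1}^n \tau_k h_k\right)
\]
can be verified (by a direct calculation of $X_{Tm}\cdot\nabla\mathcal{D}_{Tm}$) to be a Darboux first integral of $X_{Tm}$. Only the factor $C=\exp(-2\sum_{l=1}^r\theta_l)$ is multi-valued on a punctured tubular neighborhood of $C_k$, since $C_k$ is disjoint from every $C_j$ and from every $p_j$. Travelling once around $C_k$ in the direction of the flow, $\mathcal{D}_{Tm}$ picks up the monodromy factor $M=\exp(-4\pi s_k\epsilon_k)$, where $s_k\geq 1$ is the number of primary cycles contained in $D_{C_k}$ and $\epsilon_k\in\{\pm1\}$ is the sign of the flow direction on $C_k$. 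From Eq.~\eqref{eq:xtm} together with the fact that $\sign(f_j|_{C_k})=-1$ exactly when $C_k\subset D_{C_j}$, one obtains $\epsilon_k=\sign(\lambda_k|_{C_k})=(-1)^{M_k}$.

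Since $\mathcal{D}_{Tm}$ is constant along trajectories, its principal branch satisfies $\mathcal{D}_{Tm}(s_0,\pi(u_0))=M^{-1}\mathcal{D}_{Tm}(s_0,u_0)$. Near $u=0$, the dominant singular factor of $\ln\mathcal{D}_{Tm}$ in $u$ is $\Lambda\tau_k h_k=-\Lambda\tau_k/[(m_k-1)f_k^{m_k-1}]$ for $m_k\geq 2$ (and $\tau_k(1+\Lambda)\ln f_k$ for $m_k=1$). Writing $\pi(u_0)=u_0+\epsilon$ with $\epsilon=cu_0^{m_k}+O(u_0^{m_k+1})$ and using $f_k=2r_ku+u^2$, the leading-order Taylor expansion of $\ln\mathcal{D}_{Tm}(s_0,\pi(u_0))-\ln\mathcal{D}_{Tm}(s_0,u_0)=-\ln M$ gives $\Lambda\tau_k(2r_k)^{1-m_k}c=4\pi s_k(-1)^{M_k}$, whence $\sign(c)=(-1)^{M_k}$ and $\nu_k=(-1)^{m_k+M_k+1}$. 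The main technical hurdle is to ensure that the essential singularity of $\exp(\Lambda\tau_k h_k)$ really dictates the leading order in $u_0$, to control the subdominant contributions from $\tau_k\ln f_k$ and the smooth part of $\ln\mathcal{D}_{Tm}$, and to check that the branch conventions for $h_k$ are consistent on both sides of $C_k$; once the monodromy $M$ and the flow direction $\epsilon_k$ are identified, the remaining sign bookkeeping is routine.
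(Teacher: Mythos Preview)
Your argument is correct but takes a markedly different route from the paper's. You reconstruct the displacement function of the Poincar\'e map by exploiting the monodromy of the Darboux first integral $\mathcal D_{Tm}$ around $C_k$: the multivalued factor $C$ forces $\mathcal D_{Tm}\circ\pi = M^{-1}\mathcal D_{Tm}$, and comparing the leading singular part $\Lambda\tau_k h_k\sim\Lambda\tau_k(2r_ku)^{1-m_k}/(1-m_k)$ on both sides isolates the coefficient $c$ of $u_0^{m_k}$ in $\pi(u_0)-u_0$, whose sign then gives $\nu_k$. The paper instead bypasses the return map entirely: it reads the interior stability as $-\sign(Flux_k)$ for the flux of $X_{Tm}$ through a nearby interior curve $\hat C_k$ transverse to the flow, and uses the inverse integrating factor $V_{Tm}=A_mB$ to reduce $\sign(Flux_k)$ to $\sign\big(V_{Tm}(p_0)\big)\cdot\sign\big(\int_{\hat C_k}\omega_{Tm}/V_{Tm}\big)$; since $\omega_{Tm}/V_{Tm}=-dH_T-\Lambda\sum df_j/f_j^{m_j}$ and only the $dH_T$ part contributes $4\pi s_k>0$, one gets $\nu_k=-\sign(A_m|_{\hat C_k})=(-1)^{m_k+M_k+1}$ in one line. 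The paper's argument is considerably shorter and needs neither the explicit first integral $\mathcal D_{Tm}$ nor any asymptotics; your approach, on the other hand, actually yields the leading Poincar\'e coefficient $c=4\pi s_k(-1)^{M_k}(2r_k)^{m_k-1}/(\Lambda\tau_k)$ rather than just its sign, which is finer information.
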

\begin{proof}
Consider a closed curve $\hat C_k$ in the region bounded by $C_k$ and close enough to it so that $\hat C_k$ is disjoint from all $C_j$ and $p_j$. Since $C_k$ is a limit cycle of $X_{Tm}$ we can assume that $\hat C_k$ is transverse to the integral curves of $X_{Tm}$ at each point. The interior stability of $C_k$ is determined by the sign of the flux through $\hat C_k$ of $X_{Tm}$, which is given by 
\begin{equation}\label{flux}
Flux_k:=\int_{\hat C_k}X_{Tm}\cdot n ds\,,
\end{equation}
where $n$ is the unit normal vector on $\hat C_k$ pointing outwards and $s$ parametrizes the curve $\hat C_k$ in the positive direction (i.e. counterclockwise).

The sign of the flux~\eqref{flux} can be easily computed using the $1$-form $\omega_{Tm}:=-Q_{Tm}dx+P_{Tm}dy$ and the inverse integrating factor $V_{Tm}:=A_mB$ of $X_{Tm}$. Indeed, noticing that $V_{Tm}$ does not vanish at any point of $\hat C_k$, and observing that 
$$
\frac{\omega_{Tm}}{V_{Tm}}=-dH_T-\Lambda\sum_{j=1}^n\frac{df_j}{f_j^{m_j}}\,,
$$
see the definitions of all the involved functions in Sections~\ref{section multiplicity} and~\ref{section stability}, it is obvious that the sign of $Flux_k$ is the same as the sign of
\begin{align*}
\widehat{Flux_k}:=V_{Tm}(p_0)\int_{\hat C_k}\frac{\omega_{Tm}}{V_{Tm}}= -V_{Tm}(p_0)\int_{\hat C_k}dH_T=4\pi s_k V_{Tm}(p_0)\,,
\end{align*}
where $p_0$ is any fixed point on $\hat C_k$. The second equality follows from the fact that each term $\frac{df_j}{f_j^{m_j}}$ does not contribute to the integral because $f_j$ does not vanish at any point of $\hat C_k$. For the last equality we have used the expression of $\int dH_T$ obtained in the proof of Theorem~\ref{teorema period}, taking into account that the curve $\hat C_k$ is positively oriented. 
  
Since the interior stability of $C_k$ is given by $-\sign(Flux_k)$, the formula above implies that $\nu_k=-\sign(V_{Tm}(\hat C_k))=-\sign(A_m(\hat C_k))$. Then, arguing as in the proof of Lemma~\ref{lemma stability m=1}, but taking into account the multiplicity, we easily obtain the desired expression for $\nu_k$.
\end{proof}

In the following theorem we show how to modify the vector field $X_{Tm}$ in order to prescribe the interior stabilities of its limit cycles. The idea is the same as in the proof of Theorem~\ref{teorema stability}: we add additional cycles to the configuration $\Gamma$ to obtain a new configuration $\tilde\Gamma$ so that $\tilde \nu_k$ for the new configuration has the desired sign. To remove the extra cycles $\tilde\Gamma\backslash\Gamma$, we add a singular point over each extra limit cycle of the vector field $\tilde X_{Tm}$ realizing $\tilde \Gamma$.

  \begin{theorem}\label{main theorem}
    Let $\Gamma=\{C_1,\ldots,C_n\}$ be a configuration of cycles, $\{m_1,\ldots,m_n\}$ a set of positive integers, $\{T_1,\ldots,T_n\}$ a set of positive constants and $\{\nu_1,\ldots,\nu_n\}$ a set of $\pm1$. Then $\Gamma$ is realized by a planar polynomial vector field $X$ with
    \[\deg(X)<2\left(2(N-n)+r+\sum_{k=1}^n m_k\right)\]
    where each periodic orbit $C_k$ has multiplicity $m_k$, period $T_k$ and interior stability $\nu_k$. As usual, $r$ is the number of primary cycles in $\Gamma$, and $N:=n+n_1+2n_2$ where $n_1:=card\,\{k\ :\ m_k \text{ is odd}\}$ and $n_2:=card\,\{k\ :\ m_k \text{ is even and } M_k \text{ is odd}\}$. Moreover, $X$ admits a polynomial inverse integrating factor, it is Darboux integrable and all its limit cycles are algebraic.
\end{theorem}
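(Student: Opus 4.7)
The plan is to adapt the strategy of Theorem~\ref{teorema stability} to non-trivial multiplicities. After reducing, as before, to the case where $\Gamma$ consists of $n$ disjoint circles $C_k=\{f_k=0\}$, I would first enlarge $\Gamma$ into a new configuration $\tilde\Gamma$ by adding exactly $N-n$ concentric auxiliary circles of multiplicity $1$: one near each $C_k$ with $m_k$ odd, two near each $C_k$ with $m_k$ even and $M_k$ odd, and none near the remaining cycles. This matches the count $N-n=n_1+2n_2$ and will yield the stated degree bound at the end.

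The hard part is the choice of placement --- just inside or just outside the companion $C_k$ --- of each auxiliary circle, so that Lemma~\ref{lemma stability m>1} applied to $\tilde\Gamma$ gives interior stability exactly $\nu_k$ for every original $C_k$. The guiding observation is that an auxiliary circle placed just outside $C_j$ contributes $+1$ to the quantity $M_\ell$ computed in $\tilde\Gamma$ whenever $\ell=j$ or $C_\ell\subsetneq C_j$, while one placed just inside contributes $+1$ only for strict descendants $C_\ell\subsetneq C_j$. I would then process the inclusion forest of $\Gamma$ from the outermost cycles inwards and solve the resulting mod-$2$ linear system for the parities of $M_1,\ldots,M_n$ in $\tilde\Gamma$, producing the required placements. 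Verifying that the prescribed quota $n_1+2n_2$ of auxiliary cycles is indeed enough to realize every choice of $\{\nu_k\}$ is the main technical obstacle of the proof.

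Once $\tilde\Gamma$ and the placements are fixed, I apply Theorem~\ref{teorema multiplicity and period} on $\tilde\Gamma$ with multiplicities $(m_1,\ldots,m_n,1,\ldots,1)$ to obtain a polynomial vector field $\tilde X_{Tm}$, and then set $X:=L\cdot\tilde X_{Tm}$ with $L:=\prod_j l_j$, where $l_j(x,y)=(x-a_j)^2+(y-b_j)^2$ and $(a_j,b_j)$ is a chosen point on the $j$-th auxiliary circle. The factor $L$ introduces a singular point on each auxiliary cycle, turning it into a homoclinic connection, while leaving the $C_k$ as algebraic limit cycles of $X$: since $L$ does not vanish on any $C_k$, the inverse integrating factor $L\cdot\tilde V_{Tm}$ of $X$ still vanishes to order $m_k$ there, so the multiplicity is preserved, and since $L>0$ on $C_k$ the interior stability $\nu_k$ is preserved as well. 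The parameters $\tau_k$ of $\tilde X_{Tm}$ are then chosen, exactly as in the formula following Theorem~\ref{teorema multiplicity and period} but weighted by the extra factor $1/L(\gamma^{LR}_k(t))$, so that each $C_k$ has period $T_k$. The degree estimate $\deg X\leq \deg L+\deg\tilde X_{Tm}<2(N-n)+2(r+\sum_k m_k+(N-n))=2(2(N-n)+r+\sum_k m_k)$ then follows using that $\tilde\Gamma$ still has exactly $r$ primary cycles (by the same argument as in Theorem~\ref{teorema stability}), and Darboux integrability together with the polynomial inverse integrating factor are inherited from $\tilde X_{Tm}$.
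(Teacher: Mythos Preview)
Your overall architecture --- enlarge $\Gamma$ with auxiliary multiplicity-one circles, build $\tilde X_{Tm}$ via Theorem~\ref{teorema multiplicity and period}, then multiply by $L$ to kill the extras and adjust the $\tau_k$ for the periods --- is exactly what the paper does, and the verification steps you list (inverse integrating factor $L\tilde V_{Tm}$, preservation of multiplicity and stability, choice of $\tau_k$, degree count) all match. The divergence is entirely in the placement step, where you have made things much harder than necessary.

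The paper does not solve a mod-$2$ system over the inclusion forest. It uses a purely \emph{local} rule depending only on $(m_k,\nu_k)$: if $m_k$ is odd, add one concentric circle of radius $r_k-\nu_k\varepsilon$; if $m_k$ is even and $\nu_k=+1$, add two concentric circles of radii $r_k\pm\varepsilon$; if $m_k$ is even and $\nu_k=-1$, add nothing. The reason this suffices is a cancellation you overlooked: for $\varepsilon$ small, every auxiliary circle attached to an ancestor $C_j$ of $C_k$ (whether placed just inside or just outside $C_j$) still contains $C_k$, so the ambient auxiliaries contribute to $\tilde M_k$ one unit for each odd-$m_j$ ancestor and an even number for each even-$m_j$ ancestor --- i.e.\ exactly $M_k\pmod 2$. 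Thus $\tilde M_k\equiv \delta_k\pmod 2$, where $\delta_k\in\{0,1\}$ records whether an auxiliary circle sits just \emph{outside} $C_k$ itself, and a direct check with Lemma~\ref{lemma stability m>1} shows the three local rules give $\nu_k$ in every case. No global processing is needed.

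Your ``main technical obstacle'' --- whether $n_1+2n_2$ auxiliary circles always suffice --- is a genuine obstruction: they do not. With $n=1$, $m_1=2$, $\nu_1=+1$ one has $M_1=0$, hence $n_1=n_2=0$ and no circle is added, yet Lemma~\ref{lemma stability m>1} then forces interior stability $-1$. The paper's construction in fact adds $n_1+2\,\mathrm{card}\{k:m_k\text{ even},\ \nu_k=+1\}$ circles, which is not in general $n_1+2n_2$; the equality $N=n+n_1+2n_2$ claimed in the proof, and hence the precise degree bound as stated, does not hold for arbitrary $\nu_k$. So your suspicion that the quota was the delicate point is correct, but the fix is to adopt the paper's local rules and accept that the auxiliary count (and the resulting degree bound) is governed by the $\nu_k$ rather than by the $M_k$.
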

\begin{proof}
As usual, we assume that $\Gamma$ consists of circles. Let us define a new configuration of circles $\tilde{\Gamma}:=\Gamma\cup\{C_{n+1},\ldots,C_{N}\}$ following these rules:
\begin{itemize}
       \item If $m_k$ is odd, we add a concentric circle of radius $r_k-\nu_k\varepsilon$.
       \item If $m_k$ is even and $\nu_k=-1$, we add nothing.
       \item If $m_k$ is even and $\nu_k=1$, we add two concentric circles of radii $r_k\pm\varepsilon$.
\end{itemize}
Here $\varepsilon$ is a small enough constant so that all the circles in $\tilde \Gamma$ are disjoint, and disjoint from $p_k$. Using Lemma~\ref{lemma stability m>1} it is easy to check  that $N=n+n_1+2n_2$.
 
Now, we construct a vector field $\tilde X_{Tm}$ as in Section~\ref{section multiplicity} that realizes the configuration $\tilde\Gamma$, with $m_k$, $k\in\{1,\ldots,n\}$, the multiplicity we want to prescribe and $m_{n+j}=1$, $j\in\{1,\ldots,N-n\}$. Moreover, the constant $\tau_k$ corresponding to each limit cycle $C_k$ will be fixed later, while we take $\tau_{n+j}=1$. A simple argument using Lemma~\ref{lemma stability m>1} implies that the interior stability of each limit cycle $C_k$ of $\tilde X_{Tm}$ is precisely $\nu_k$. 

Taking an arbitrary point $q_j:=(a_j,b_j)\in C_{n+j}$ for every extra circle, we define the function $L$ as in the proof of Theorem~\ref{teorema stability}, i.e.
\[L:=\prod_{j=1}^{N-n} l_j\,,\]    
where $l_j:=(x-a_j)^2+(y-b_j)^2$, and the vector field
$$
X:=L\tilde X_{Tm}\,.
$$

Since the factor $L$ is positive on $\Gamma$, this set is realized by $X$ as algebraic limit cycles, while the remaining cycles $C_{n+j}\subset\tilde\Gamma$ contain a singular point, thus becoming homoclinic connections. The factor $L$ does not change the interior stability of each cycle $C_k$, which is then $\nu_k$ by construction, nor the multiplicity $m_k$. The constants $\{\tau_k\}_{k=1}^n$ can also be chosen such that each limit cycle $C_k$ of $X$ has period $T_k$. More precisely, since the vector field $\tilde X_{Tm}$ on each limit cycle $C_k$ can be written as in Eq.~\eqref{eq:xtm}, we conclude that 
$$
\tau_k=\frac{1}{T_k}\int_0^{T_k^{LR}}\frac{dt}{L(\gamma_k^{LR}(t))[\tilde\Lambda+f_k^{m_k-1}(\gamma_k^{LR}(t))]\prod_{j\neq k}^Nf_j^{m_j-1}(\gamma_k^{LR}(t))}\,,
$$
where we are using the notation introduced in Section~\ref{section multiplicity}.

Finally, an easy computation shows that $X$ is a polynomial vector field of degree as in the statement of the theorem, and $V:=A_mBL$ is an inverse integrating factor. Using the functions defined in Sections~\ref{section periods} and~\ref{section multiplicity}, it is ready to check that the function
$$
A_TBC\exp{\Big(\Lambda \sum_{j=1}^N\tau_jh_j\Big)}
$$ 
is a Darboux first integral of the vector field $X$. This completes the proof of the theorem.
\end{proof}

\begin{remark}
We observe that the vector field $X$ in the proof of Theorem~\ref{main theorem} can be constructed without including the functions $f_{n+j}$, $j\in\{1,\ldots,N-n\}$, in the quantities $H_T$, $F_T$ an $G_T$ appearing in the definition of $\tilde X_{Tm}$.
\end{remark}

\section*{Acknowledgments}
The authors are supported by a ``La Caixa'' fellowship (J.M.-B.) and the ERC Starting Grant 335079 (D.P.-S.). This work is supported in part by the ICMAT--Severo Ochoa grant SEV-2011-0087.

\bibliographystyle{amsplain}

\end{document}